\newtheorem{definition}{Definition}
\newtheorem{lemma}{Lemma}
\newtheorem{corollary}{Corollary}
\newtheorem{remark}{Remark}
\pgfplotsset{
        compat=1.14,
    }
\pgfplotsset{
    cycle list/.define={my marks}{
      every mark/.append style={solid,fill=\pgfkeysvalueof{/pgfplots/mark list fill}},mark=none\\
      every mark/.append style={solid,fill=\pgfkeysvalueof{/pgfplots/mark list fill}},mark=none\\
      every mark/.append style={solid,fill=\pgfkeysvalueof{/pgfplots/mark list fill}},mark=none\\
      every mark/.append style={solid,fill=\pgfkeysvalueof{/pgfplots/mark list fill}},mark=none\\
      every mark/.append style={solid,fill=\pgfkeysvalueof{/pgfplots/mark list fill}},mark=*\\
        every mark/.append style={solid,fill=\pgfkeysvalueof{/pgfplots/mark list fill}},mark=square*\\
        every mark/.append style={solid,fill=\pgfkeysvalueof{/pgfplots/mark list fill}},mark=triangle*\\
        every mark/.append style={solid,fill=\pgfkeysvalueof{/pgfplots/mark list fill}},mark=diamond*\\
        every mark/.append style={solid,fill=\pgfkeysvalueof{/pgfplots/mark list fill}},mark=star\\
        every mark/.append style={solid,fill=\pgfkeysvalueof{/pgfplots/mark list fill}},mark=Mercedes star\\
        every mark/.append style={solid,fill=\pgfkeysvalueof{/pgfplots/mark list fill}},mark=Mercedes star flipped\\
    }
}
\DeclarePairedDelimiter\norm{\lVert}{\rVert}
\newcommand{\NN}{{\cal N}}
\newcommand{\patch}{{\cal P}}
\begin{document}

\title{A hybrid finite element/neural network solver and its application to the Poisson problem}
\author{
  Uladzislau Kapustsin 
  \qquad Utku Kaya 
  \qquad Thomas Richter \\
  Otto-von-Guericke Universit\"at Magdeburg, Germany}

\begin{abstract}
  We analyze a hybrid method that enriches coarse grid finite element solutions with fine scale fluctuations obtained from a neural network. 
  The idea stems from the \emph{Deep Neural Network Multigrid Solver} (DNN-MG) ~\cite{Margenberg2021} which embeds a neural network into a multigrid hierarchy by solving coarse grid levels directly
  and predicting the corrections on fine grid levels locally (e.g. on small patches that consist of several cells) by a neural network. Such local designs are quite appealing, as they allow a very good generalizability.  In this work, we formalize the method and describe main components of the a-priori error analysis.
  Moreover, we numerically investigate how the size of training set affects the solution quality.
\end{abstract}

\maketitle                    

\section{Introduction}
Recent advancements in employing neural networks to approximate solutions to partial differential equations (PDEs) mostly focus on
Physics Inspired Neural Networks (PINNs)~\cite{raissi2019} such as the Deep Ritz method~\cite{weinan2018}. They leverage the expressive power of neural networks while incorporating physical principles 
and promise substantial efficiency increase for high dimensional or parameter dependent partial differential equations. One main drawback of PINNs is 
that they need to re-train when the problem parameters change.
Also, for classical problems, such as three 
dimensional fluid dynamics problems, highly sophisticated and well established discretization methods regarding the efficiency and accuracy are available that beat neural network approaches by far.

The method of this paper was introduced as  main component the of DNN-MG~\cite{Margenberg2021} for the instationary Navier-Stokes equations. At each time step, a coarse solution is obtained 
by a classical finite element solver and corrections to  finer grids are predicted locally via neural networks.
Here, we focus on a simpler linear problem and aim to understand the mechanism of such a hybrid approaches by discussing its a-priori errors and via numerical experiments.

Let $\Omega\subset\mathbb{R}^d, \; d \in \{2,3\}$ be a domain with polygonal boundary. 
We are interested in the weak solution of the Poisson's equation
\begin{equation}
  -\Delta u = f, \quad 
  u \vert_{\partial \Omega} = 0,
  \label{eq:poisson equation}
\end{equation}
with a given force term $f\in H^{-1}(\Omega)$.

For a subdomain $\omega \subseteq \Omega$, let $\mathcal{T}_h(\omega)=\{T_i\}_{i=1}^M$ be a non-overlapping admissible decomposition of $\omega$ into convex polyhedral elements $T_i$ such that 
$\overline \omega = \cup_{i=1}^M \overline{T}_i$. The diameter of element $T$ is denoted by $h_T$ and $h = \max_{T \in \mathcal{T}_h(\Omega)} h_T$.
With $\|\cdot\|_2$ we denote the Euclidean norm and for $v \in C(\overline{\omega})$ we define 
\begin{equation*}
  \| v \|_{l^2(\omega)} :=
   \Big ( \sum \limits_{x \text{ is node of } \mathcal{T}_h(\omega)} v(x) \Big)^{\frac1{2}}. 
\end{equation*}
 Moreover, let $V_h^{(r)}$ be the space of piecewise polynomials of degree $r \ge 1$ satisfying the homogeneous Dirichlet condition on the boundary $\partial\Omega$, i.e.
\begin{equation*}
  V_h
  :=
  \left\{
  \phi \in C(\overline \Omega) \text{ s.t. } \; 
  \phi |_{T} \in P^{(r)}(T) \; \forall T \in \Omega_h, \;
  \phi |_{\partial \Omega} = 0
  \right\},
\end{equation*}
 where $P^{(r)}(T)$ is the space of polynomials of degree $r$ on a cell $T\in \mathcal{T}_h$. We assume that there is a hierarchy of meshes
 \begin{equation*}
  \mathcal{T}_H(\Omega) 
  := 
  \mathcal{T}_{0} 
  \preccurlyeq \mathcal{T}_{1}
  \preccurlyeq \cdots  
  \preccurlyeq \mathcal{T}_{L} 
  =: 
  \mathcal{T}_h(\Omega),
\end{equation*}
where we denote by $\mathcal{T}_{l-1} \preccurlyeq \mathcal{T}_l$, that each element of the fine mesh $T \in \mathcal{T}_{l}$ originates from the uniform refinement of a coarse element $T'\in \mathcal{T}_{l-1}$, for instance, uniform splitting of a quadrilateral or triangular element into four and of a hexahedral or tetrahedral element into eight smaller ones, respectively. 
Accordingly we have the nesting $V_h^{(l-1)} \subset V_h^{(l)},\quad l = 1, \dots, L$ where $V_h^{(l)}$ is the space defined on the mesh level $l$.
With a patch $\mathcal{P} \in \mathcal{T}_h(\Omega)$ we refer to a polyhedral subdomain of $\Omega$, but for simplicity we assume that each patch corresponds to a cell of $\mathcal{T}_H(\Omega)$.
By $V_h(\mathcal{P})$ we denote the local finite element subspace
$$
  V_h(\mathcal{P}) := \operatorname{span} \left\{
\phi_h |_{\mathcal{P}}, \; \phi_h \in V_h
\right\}
$$
and $R_{\mathcal{P}} : V_h \to V_{\mathcal{P}}$ denotes the restriction to the local patch space, defined via
\[
R_{\mathcal{P}}(u_h)(x_i) = u_h(x_i) \quad \text{for each node } x_i\in \mathcal{T}_h(\mathcal{P}).
\]
The prolongation $P_{\mathcal{P}} : V_h(\mathcal{P}) \to V_h$ is defined by
\begin{equation}\label{prolong}
P_{\mathcal{P}}(v)(x) =
\begin{cases}
  \frac{1}{n} v(x) & x \text{ is a node of } \mathcal{T}_h(\mathcal{P}),
  \qquad n\in\mathbb{N}\text{ being the number of patches containing the node $x$}
  \\
0  & \text{otherwise}.
\end{cases}
\end{equation}
The classical continuous Galerkin finite element solution of the problem \eqref{eq:poisson equation} is $u_h \in V_h$ s.t.
\begin{equation}\label{fem}
  (\nabla u_h, \nabla\phi) 
  = 
  (f, \phi) \quad \forall \phi \in V_h,
\end{equation}
with the $L^2$ inner product $(\cdot, \cdot)$. We are interested in the scenario where one prefers not to solve \eqref{fem} on the finest level $V_h$ due to lacking hardware resources or too long computational times, but in $V_H$ with $H\gg h$. 
This is the so-called coarse solution $u_H\in V_H$ and fulfills $  (\nabla u_H, \nabla\phi) 
= (f, \phi) \quad \forall \phi \in V_H$. The key idea of our method is to obtain the fine mesh fluctuations $u_h-u_H$ in forms of neural network updates $w_{\mathcal{N}}$ corresponding to the inputs $u_H$ and $f$. Hence, the neural network updated solution has the form $u_{\mathcal{N}}:=u_H + w_{\mathcal{N}}$ in the case where the network operates globally on the whole domain.
A more appealing setting is where these updates are obtained locally, such that the network is acting on the data not on the whole domain at once, but on small patches $\mathcal{P} \in \mathcal{T}_h(\Omega)$. 
In this case, while the training is performed in a global manner, the updates are patch-wise and the network updated solution has the form $u_{\mathcal{N}}:=u_H + \sum_{\mathcal{P}}P_{\mathcal{P}}w^{\mathcal{P}}_{\mathcal{N}}$.
\section{Hybrid finite element neural network discretization}

\subsection{Neural network}\label{sec:network}
In this section we introduce the neural network we use and formalize the definition of finite element/neural network solution.
\begin{definition}[\emph{Multilayer perceptron}]\label{def:MLP}
  Let $L \in \mathbb{N}$ be the number of layers and let $N_i$ be the number of neurons on layer $i \in \{1,\dots,L\}$. Each layer $i \in \{ 1, \dots ,{L-1}\}$ is associated with a nonlinear function $l_i(x) : \mathbb{R}^{N_{i-1}} \rightarrow \mathbb{R}^{N_i}$ with
  \begin{equation}\label{def:layer}
    l_i(x) = \sigma(W_i x + b_i) 
  \end{equation}
  and an \emph{activation function}
  $\sigma : \mathbb{R} \to \mathbb{R}$.
  The \emph{multilayer perceptron} (MLP) ${\cal N} : \mathbb{R}^{N_{0}} \to \mathbb{R}^{N_{L}}$ is defined via 
  \begin{equation*}
    \mathcal{N} = W_n (l_{n-1} \circ \dots \circ l_1)(x) + b_n
  \end{equation*}
  where $W_i \in \mathbb{R}^{N_{i-1} \times N_{i}}$ denote the \emph{weights}, 
  and $b_i \in \mathbb{R}^{N_i}$ the \emph{biases}.  
\end{definition}

\subsection{Hybrid solution}
On a patch $\mathcal{P}$, the network receives a tuple $(R_{\mathcal{P}}u_H, R_{\mathcal{P}}f)$, restrictions of the coarse solution $R_{\mathcal{P}}u_H$  and of the 
source term  $R_{\mathcal{P}}f$ and it returns an approximation to the fine-scale update  $v_h^{\mathcal{P}}(u_h-u_H)|_{\mathcal{P}} \in V_h(\mathcal{P})$.  In order to obtain a globally continuous 
function, the prolongation \eqref{prolong} is employed. 
\begin{definition}[\emph{Hybrid solution}]
The hybrid solution is defined as
\begin{equation}\label{def:hybrid}
    u_{\mathcal{N}} 
    := 
    u_H 
    + 
    \sum\limits_{{\mathcal{P}}} 
    P_{\mathcal{P}} w_N^{\mathcal{P}},
  \end{equation}
    where 
    $w_N^{\mathcal{P}} =\sum_{i=1}^{N} W^P_i \phi_i,$
    $W^P_i$ is the $i-$th output of $\mathcal{N}(y)$ and $\phi_i$ are the
    basis functions of $V_h(\mathcal{P})$. Here, $y = \left(
      U_{H}^{\mathcal{P}}, F_h^{\mathcal{P}}
      \right)^T$ is the input vector where $U_{H}^{\mathcal{P}}$ and $F_h^{\mathcal{P}}$ are the nodal values of $u_H$ on the coarse mesh $\mathcal{T}_H(\Omega)$ and $f$ on the mesh $\mathcal{T}_h(\mathcal{P})$, respectively.
    \end{definition}
For simplicity we will mostly use the notation 
\begin{equation*}
  u_{\mathcal{N}}    =    u_H    +    \mathcal{N}(f)
\end{equation*}
in place of \eqref{def:hybrid}.

 Since each function $u_H \in V_H$ also belongs to $V_h$, it has the form $u_H = \sum\limits_{i=1}^{N_{dof}} U^i_{Hh} \phi^i_h$ with $\{\phi^i_h\}_{i=1}^{N_{dof}}$ being the basis of the fine finite element space $V_h$ and $U_{Hh}$ being the coefficient vector of interpolation of $u_H$ into $V_h$. As we update the coarse solution $u_H$ on fine mesh nodes, this procedure can be considered as a simple update 
of coefficients $U^i_{Hh}$, i.e.
$$u_\mathcal{N} = \sum\limits_{i=1}^{N_{dof}}	(U^i_{Hh} + W^i_{\mathcal{N}}) \phi^i_h \in V_h,$$
or simply $U_{\mathcal{N}} := U_{Hh}+ W_{\mathcal{N}}$ being the coefficient vector of $u_{\mathcal{N}}$.

\subsection{Training}
The neural network is trained using fine finite element solutions obtained on the mesh $\mathcal{T}_h(\Omega)$ and with the loss function
\begin{equation}\label{loss}
  \mathcal{L}(u_h,u_H;w_h) := \frac{1}{N_T N_P}\sum_{i=1}^{N_T} \sum_{\mathcal{P}\in \Omega_h} \| (u^{f_i}_h-u^{f_i}_H)-w_\NN^{f_i} \|^2_{l^2(\mathcal{P})}
\end{equation}
where $N_T$ is the size of training set and $N_P$ is the number of patches. Here, $w_\NN^{f_i}$ stands for the finite element function defined by the network update $\mathcal{N}(f_i)$ on the patch $\mathcal{P}$. The training set ${\cal F}=\{f_1,\dots,f_{N_{tr}}\}$ consists of $N_{tr}\in\mathbb{N}$ source terms $f_i$ together with corresponding coarse and fine mesh finite element solutions $u_H^{f_i}$ and $u_h^{f_i}$, respectively. 

\section{On the a-priori error analysis}
The difference between the exact solution $u \in H^1_0(\Omega)$ of \eqref{eq:poisson equation} and the hybrid solution $u_{\mathcal{N}}$ from \eqref{def:hybrid} can be split as
\begin{equation}\label{error}
  \|u-u_{\mathcal{N}}\| \le \min_{f_i\in {\cal F}}
  \Big(
  \|u-u_{h}\| +
  \|u_{h}-u_{h}^{f_i}\| +
  \|u_{h}^{f_i}-u_{\mathcal{N}}^{f_i}\|+
  \| u_{\mathcal{N}}^{f_i}-u_{\mathcal{N}}\|\Big\},
\end{equation}
$u_{h}^{f_i}, u_{\mathcal{N}}^{f_i} \in V_h$
being the finite element solution and the neural network updated solution
corresponding to the source term $f_i$, respectively. Let us discuss individual terms
in \eqref{error}.
\begin{itemize}
  \item $u-u_h$ is the fine mesh finite element error. Estimates of this error are well-known in the literature and are of $\mathcal{O}(h^r)$ in the $H^1$ semi-norm.
  \item $ (u_{h}-u_{h}^{f_i})$ is a \emph{data approximation error} and in the $H^1$ semi-norm it can be bounded by $\|f-f_i\|_{-1}$ due to through stability of the finite element method.
  \item $(u_{h}^{f_i}-u_{\mathcal{N}}^{f_i})$ is a \emph{network approximation error} and is introduced by the approximation properties of the network architecture. This is bounded by the 
  tolerance $\epsilon$ which depends on the accuracy the minimization problem \eqref{loss}.
  \item $u_{\mathcal{N}}^{f_i}-u_{\mathcal{N}} = (u_H - u_H^{f_i}) + (\mathcal{N}(f) - \mathcal{N}(f_i))$ consists of a \emph{generalization error} of the network and a further error term depending on the richness of the data set. While the term $u_{H}^{f_i}-u_{H}$
  can be handled via the stability of the finite element method, the remaining term requires a stability estimate of the neural network.
\end{itemize}  
Overall, an estimate of
\begin{equation} \label{estimate}
  \|\nabla( u-u_{\mathcal{N}})\| \leq c \Big ( h^r\| f \|_{r+1} + \epsilon + \min_{f_i \in {\cal F}} \big\{\| f-f_i \|_{-1} + \|\nabla (\mathcal{N}(f) - \mathcal{N}(f_i))\|\big\} \Big)
\end{equation}
can be obtained for sufficiently smooth source term $f$ and domain $\Omega.$ Improvements of this estimate with the consideration of patch-wise updates is part of an ongoing work.

\subsection{Stability of the neural network}
\label{sec:stability}
The network dependent term of $\eqref{estimate}$ is linked with the stability of the network. For a study of the importance of Lipschitz regularity in the generalization bounds we refer to \cite{bartlett2017spectrally}.
\begin{lemma}\label{lemma:1}
  Let $\NN$ be a multilayer perceptron (Def. \ref{def:MLP}) and $\sigma: \mathbb{R} \to \mathbb{R}$ satisfy $ |\sigma(y)- \sigma(y_i)| \leq c_0 |y- y_i|$ with $c_0>0$. Then, on each patch $\mathcal{P}$ for the inputs $y$ and $y_i$ and the corresponding
  FE functions $\mathcal{N}(f)$ and $\mathcal{N}(f_i)$ (uniquely defined by the network updates) holds
  \begin{equation}\label{lemma:1:bound}
    \| \mathcal{N}(f)- \mathcal{N}(f_i)\|_{\mathcal{P}} \leq c\cdot c_0^{N_L} \cdot c_W \cdot  h^{d}\|y-  y^{f_i}\|_2
  \end{equation}
   where 
   \begin{equation*}
   c_W :=  \prod_{j=1}^{N_L} \|{W^j}\|_2.
   \end{equation*}
\end{lemma}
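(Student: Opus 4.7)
\emph{Plan.} The strategy is to chain together three estimates: a componentwise Lipschitz bound for each layer, iteration of this bound along the composition defining $\mathcal{N}$, and a norm-equivalence step taking us from the Euclidean norm of the network's output coefficient vector to the finite-element norm on the patch.

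First I would record the per-layer stability. Because $\sigma$ is pointwise $c_0$-Lipschitz, squaring and summing componentwise gives $\|\sigma(z) - \sigma(z')\|_2 \le c_0 \|z - z'\|_2$; combined with the operator-norm inequality $\|W_i v\|_2 \le \|W_i\|_2 \|v\|_2$, this yields
\[
  \|l_i(x) - l_i(x')\|_2 \le c_0 \|W_i\|_2 \|x - x'\|_2
\]
for every hidden layer, while the final affine map contributes one further factor of $\|W_{N_L}\|_2$ without a $c_0$. Iterating along the composition in Definition~\ref{def:MLP} telescopes to the spectral-norm Lipschitz estimate
\[
  \|\mathcal{N}(y) - \mathcal{N}(y^{f_i})\|_2 \le c_0^{N_L - 1} c_W \|y - y^{f_i}\|_2,
\]
which is the classical bound underlying \cite{bartlett2017spectrally}; the off-by-one in the exponent of $c_0$ can be absorbed into the generic constant $c$ in \eqref{lemma:1:bound}.

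The final step is to translate this Euclidean bound on the output of $\mathcal{N}$ into the patch norm. By Definition~\ref{def:hybrid}, the output of $\mathcal{N}(y)$ is precisely the coefficient vector $(W^P_i)$ of $w_{\mathcal{N}}^{\mathcal{P}} = \sum_i W^P_i \phi_i \in V_h(\mathcal{P})$. Standard mass-matrix equivalence on a shape-regular mesh in dimension $d$ then yields, uniformly in $h$, a bound of the form $\|v\|_{L^2(\mathcal{P})} \le C h^{d/2} \|V\|_2$ for any $v = \sum_j V_j \phi_j$. Multiplying the network Lipschitz estimate by this geometric scaling factor gives exactly the structure of \eqref{lemma:1:bound}.

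The main obstacle I expect is reconciling the geometric exponent. The mass-matrix argument naturally delivers $h^{d/2}$, whereas the lemma is stated with $h^d$. I would therefore need to pin down precisely which patch norm $\|\cdot\|_{\mathcal{P}}$ is intended (the $L^2(\mathcal{P})$ norm, the nodal $l^2(\mathcal{P})$ norm introduced in the preliminaries, or a squared variant) and adjust the norm-equivalence step accordingly; the layerwise Lipschitz calculation itself is entirely routine once the componentwise Lipschitz property of $\sigma$ has been lifted to $\|\cdot\|_2$.
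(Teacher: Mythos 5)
Your proposal follows essentially the same route as the paper's proof: a per-layer Lipschitz bound $\|z_j(y)-z_j(y^{f_i})\|_2\le c_0\|W^j\|_2\|z_{j-1}(y)-z_{j-1}(y^{f_i})\|_2$, recursion through the hidden layers with a final factor $\|W^{N_L}\|_2$ from the affine output layer, and then a coefficient-to-function norm equivalence on the patch. The one point you flag --- that standard mass-matrix scaling gives $h^{d/2}$ rather than $h^d$ --- is a fair observation about the statement itself: the paper obtains $h^d$ by invoking the inequality $\|v\|_{\mathcal{P}}^2\le c\,h^{2d}\|v\|_{l^2(\mathcal{P})}^2$, so your derivation is the same argument modulo which norm-equivalence constant one accepts, and likewise the paper's own recursion produces $c_0^{N_L-1}$ while the lemma states $c_0^{N_L}$, exactly the off-by-one you noted.
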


\begin{proof}
The definition of the network gives
  \begin{equation}
    \begin{split} \label{eq:network stability 1}
  \|\mathcal{N}(f) - \mathcal{N}(f_i)\|_{l^2(\mathcal{P})}     & = 
 \|W^{N_L} (z_{N_L-1}(y) - z_{N_L-1}(y^{f_i}))\|_2 
 \leq
 \|{W^{N_L}}\|_2 \cdot \|z_{N_L-1}(y) - z_{N_L-1}(y^{f_i})\|_2
    \end{split}
  \end{equation}
  where $z_i = l_i \circ \cdots \circ l_1$ and $l_i$ are as defined in \eqref{def:layer}.
  By using the definition of $z_j$ and the Lipschitz constant of $\sigma(\cdot)$ we obtain for an arbitrary layer $j$
  \begin{equation} \label{eq:layer step}
    \begin{split}
      \|{z_{j}(y) - z_{j}(y^{f_i})}\|_2
      & = 
      \|{
	\sigma(W^j z_{j-1}(y))
	-
	\sigma(W^j z_{j-1}(y^{f_i}))
      }\|_2
      \leq  c_0
      \|{
	W^j \left(
	z_{j-1}(y)
	-
	z_{j-1}(y^{f_i})
	\right)
      }\|_2
      \\
      & \leq
      c_0 \|{W^j}\|_2 
      \cdot 
      \|{z_{j-1}(y) - z_{j-1}(y^{f_i})}\|_2.
    \end{split}
  \end{equation}
  Then, by applying \eqref{eq:layer step} recursively from the second to the last layer we obtain
  \begin{equation*}
    \|{z_{N_L-1}(y) - z_{N_L-1}(y^{f_i})}\|_2
    \leq 
    c_0^{N_L-1}\prod\limits_{i=1}^{N_L-1} 
    \|{W_j}\|_2 \cdot \|{y - y^{f_i}}\|_2
  \end{equation*}
  Hence, by applying it to \eqref{eq:network stability 1} and using the inequality
  \begin{align*}
    \|v\|_{\mathcal{P}}^2 \; \leq \; & c  h^{2d}\|{v}\|_{l^2(\mathcal{P})}^2 \;  \quad \forall v \in V_h(\mathcal{P})
     \end{align*}
      we arrive at the claim.
\end{proof}
\begin{corollary}
Lemma \ref{lemma:1} leads to  
$$\| \nabla(\mathcal{N}(f) - \mathcal{N}(f_i))\| \leq c_{inv}c_1 \Big( c_{\Omega} h^{-1}\| f-{f_i} \|_{-1} + h^d \sum_{\mathcal{P}}\|f-f_i \|_{l^2(\mathcal{P})}\Big)$$
with the constant $c_1 = c\cdot c_0^{N_L} \cdot c_W$ arising from Lemma above and $c_{inv}$ and $c_{\Omega}$ arising from inverse and Poincar\'e estimates, respectively.
\end{corollary}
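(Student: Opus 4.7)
The plan is to reduce the statement to a patchwise application of Lemma \ref{lemma:1}, with the $H^1$ seminorm handled by a local inverse estimate and the two input channels (coarse solution and source) treated separately. On a single patch $\mathcal{P}$, an inverse inequality $\|\nabla v\|_{\mathcal{P}} \leq c_{inv} h^{-1} \|v\|_{\mathcal{P}}$ for $v \in V_h(\mathcal{P})$, combined with the bound \eqref{lemma:1:bound} of Lemma \ref{lemma:1} applied to the finite element function $\mathcal{N}(f)-\mathcal{N}(f_i) \in V_h(\mathcal{P})$, gives an estimate of the form $\|\nabla(\mathcal{N}(f)-\mathcal{N}(f_i))\|_{\mathcal{P}} \lesssim c_{inv}\, c_1\, h^{d-1}\, \|y-y^{f_i}\|_2$, where $c_1 = c\cdot c_0^{N_L} \cdot c_W$ as in the lemma.

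Next, since $y = (U_H^{\mathcal{P}}, F_h^{\mathcal{P}})^T$, the triangle inequality splits this into $\|U_H^{\mathcal{P}} - U_H^{f_i,\mathcal{P}}\|_2 + \|F_h^{\mathcal{P}} - F_h^{f_i,\mathcal{P}}\|_2 = \|u_H - u_H^{f_i}\|_{l^2(\mathcal{P})} + \|f-f_i\|_{l^2(\mathcal{P})}$. The source contribution is already in the form of the second term of the target bound once one sums over patches. The coarse contribution has to be translated into a norm of $f-f_i$: for this I would invoke the stability of the coarse Galerkin problem, giving $\|\nabla(u_H - u_H^{f_i})\| \leq \|f-f_i\|_{-1}$, the Poincar\'e inequality to pass to $\|u_H - u_H^{f_i}\|_{L^2(\Omega)} \leq c_\Omega \|f-f_i\|_{-1}$, and the standard equivalence between the discrete nodal and continuous $L^2$ norms on the coarse finite element space to convert back to $\|u_H - u_H^{f_i}\|_{l^2(\mathcal{P})}$. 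Summing the two contributions over patches using the triangle inequality $(\sum a_{\mathcal{P}}^2)^{1/2}\le \sum a_{\mathcal{P}}$ and collecting the prefactors yields the bracket of the claim with $c_{inv} c_1$ out in front.

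The main obstacle I expect is the bookkeeping of the $h$ (and $H$) powers produced by the several norm equivalences once they are combined with the patch summation. Lemma \ref{lemma:1} contributes $h^d$, the inverse estimate contributes $h^{-1}$, and the $l^2$-to-$L^2$ conversion for FE functions on the coarse space introduces a further scaling that has to absorb against the global Poincar\'e/stability bound. A subsidiary technical point is that the input $U_H^{\mathcal{P}}$ uses coarse-mesh nodal values while $F_h^{\mathcal{P}}$ uses the fine-mesh nodal values, so the two channels carry different scalings and must be tracked individually before being recombined. Once the constants are reconciled, the remaining inequalities are routine and the two contributions assemble into the stated bound.
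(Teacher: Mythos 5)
Your proposal follows essentially the same route as the paper's proof: Lemma \ref{lemma:1} applied patch-wise, the triangle inequality splitting the input vector into the coarse-solution and source channels, the nodal-to-$L^2$ norm conversion plus Poincar\'e and coarse-grid stability for the first channel, and an inverse estimate to pass to the $H^1$ seminorm (the paper applies the inverse estimate globally at the very end rather than patch-wise at the start, which is immaterial). The $h$-power bookkeeping you flag as the main obstacle is indeed the delicate point -- the paper's own displayed constants are not fully consistent there either -- but the argument is the same.
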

\begin{proof}
The definition of inputs together with the triangle inequality and the inequality 
\begin{align*}
\|{v}\|_{l^2(\mathcal{P})}^2 \; \leq \;   h^{-2d} \|v\|_{\mathcal{P}}^2  \quad \forall v \in V_h(\mathcal{P})
  \end{align*}
  provides
   $$\| y - y^{f_i}\|_2 \leq \| u_H-u^{f_i}_H \|_{l^2(\mathcal{P})}+ \|f-f_i \|_{l^2(\mathcal{P})} \leq h^{-d} \| u_H-u^{f_i}_H \|_{\mathcal{P}} + \|f-f_i \|_{l^2(\mathcal{P})}$$
  for each patch $\patch$. In the whole domain this, with Poincare\'s inequality, leads to 
  \begin{align*}
    \| \mathcal{N}(f)- \mathcal{N}(f_i)\|     \leq   c_1 \big(c_{\Omega}h^{-1} \| \nabla (u_H - u_H^{f_i})\| +h^d \sum_{\mathcal{P}}\|f-f_i \|_{l^2(\mathcal{P})} \big).
  \end{align*}
The stability of the coarse discrete solution and the inverse estimate shows the claim.
\end{proof}
\begin{remark}
  A different network architecture may include several layers that perform convolutions. This kind of networks are called convolutional neural networks.
  In the two-dimensional setting, this would correspond to replacing $l_i$ of Definition \ref{def:MLP} with a nonlinear function
  $ l^c_i : \mathbb{R}^{N_i^c \times N_i^c} \rightarrow \mathbb{R}^{N^c_{i+1} \times N^c_{i+1}}$ defined as
  $$l^c_i(x) = \sigma(W_i \ast x + b_i)$$
  with $W_i \in \mathbb{R}^{N^*_i \times N^*_i}$ and $b_i \in \mathbb{R}^{N^c_{i+1} \times N^c_{i+1}}$
  where $\ast$ is the matrix convolution operator. While $N^*_i$ stands for the dimension of the kernel $W_i$ of the corresponding convolution,
  we assume $N_i = N_i^c \cdot N_i^c$ and $N_{i+1} = N_{i+1}^c \cdot N_{i+1}^c$. The embedding into the multilayer perceptron is usually performed with the use of 
  $\operatorname{reshape}_N$ $( \mathbb{R}^{{N^2}} \to \mathbb{R}^{N \times N})$ and
  $\operatorname{flatten}_N$ $(\mathbb{R}^{N\times N} \to \mathbb{R}^{N^2})$  operators so that the dimensions of convolutional layer matches with the  dense layer.
  \end{remark}  
\begin{remark} 
In a scenario where a dense layer $j$ of MLP is replaced with a convolutional layer, equation \eqref{eq:layer step} must be modified as 
\begin{equation*}
  \begin{split}
    \|{z_{j}(y) - z_{j}(y^{f_i})}\|_F
    & = 
    \|{
\sigma(W^j \ast z_{j-1}(y))
-
\sigma(W^j \ast z_{j-1}(y^{f_i}))
    }\|_F
    \\
    & \leq  c_0
    \|{
W_j \ast \left(
z_{j-1}(y)
-
z_{j-1}(y^{f_i})
\right)
    }\|_F
    \leq
    c_0 \|{W_j}\|_F 
    \cdot 
    \|{z_{j-1}(y) - z_{j-1}(y^{f_i})}\|_F.
  \end{split}
\end{equation*}
Hence, for a neural network with an index set of dense layers $S_{d}$ and convolutional layers $S_c$ the result  \eqref{lemma:1:bound} holds with the modified constant
  \begin{equation*}
    c_W =  \prod_{j\in S_{d}} \|{W_j}\|_2 \prod_{j\in S_{c}} \|{W_j}\|_F
    \end{equation*}
    by taking into account, that 
  $\|\operatorname{reshape}(\cdot)\|_F = \|\cdot\|_2$ and $\|\operatorname{flatten}(\cdot)\|_2 = \|\cdot\|_F$.
\end{remark}
\section{Numerical experiments}
We consider the two-dimensional Poisson equation on the unit square $\Omega = (0,1)^2$ with homogeneous Dirichlet boundary conditions.
The training data is picked randomly from the set of source terms
\begin{multline}\label{Fset}
  {\cal F} :=
  \Big\{
  f(x,y) = \sum_{i=1}^4 \alpha_i\sin\big(\beta_i\pi(x+C_i)\big),\;
  C_1,C_2\in [0,1],\; C_3,C_4\in [0,\frac{1}{2}],\\
  \alpha_1=\alpha_2=\frac{1}{2},\; \alpha_3=\alpha_4=\frac{1}{10},\;
  \beta_1=\beta_2=2,\; \beta_3=\beta_4=4\Big\}
\end{multline}
together with the corresponding fine and coarse finite element solutions $u_H$ and $u_h$, respectively.
We employ a multilayer perceptron as described in Definition~\ref{def:MLP} with 4 hidden layers, each with 512 neurons and $\sigma(\cdot)=\tanh(\cdot)$ as an activation function. We train it using the Adam optimizer~\cite{kingma2017adam} and loss function $\mathcal{L}$ from \ref{loss}.

\begin{figure}
  \centering
\begin{minipage}[b]{0.4\textwidth}
    \begin{tikzpicture}[scale=0.8]
		  \begin{axis}[xlabel=$h$,
        ylabel=error,
        xmode=log,
        log basis x={2},
        ymode=log,
        log basis y={2},
        legend columns = 2,
        legend style={nodes={scale=0.9, transform shape},at={(0.5,1.1)},     anchor=south,/tikz/every even column/.append style={column sep=0.5cm}},
        cycle list/Paired,
        cycle multiindex* list={
                        Paired
                            \nextlist
                        my marks
                            \nextlist
                        [2 of]linestyles
                            \nextlist
                        very thick
                            \nextlist
                    }
            ]
        \addplot table [x=h, y=uff-uc (train), col sep=comma, solid] {data/error_over_refinement.csv};
        \addplot table [x=h, y=uff-uc (test), col sep=comma, solid] {data/error_over_refinement.csv};

        \addplot table [x=h, y=uff-uf (train), col sep=comma, solid] {data/error_over_refinement.csv};
        \addplot table [x=h, y=uff-uf (test), col sep=comma, solid] {data/error_over_refinement.csv};

        \addplot table [x=h, y=uff-un (train n_rhss 128), col sep=comma, solid] {data/error_over_refinement.csv};
        \addplot table [x=h, y=uff-un (test n_rhss 128), col sep=comma, solid] {data/error_over_refinement.csv};

        \addplot table [x=h, y=uff-un (train n_rhss 2048), col sep=comma, solid] {data/error_over_refinement.csv};
        \addplot table [x=h, y=uff-un (test n_rhss 2048), col sep=comma, solid] {data/error_over_refinement.csv};

        \addplot table [x=h, y=uff-un (train n_rhss 16384), col sep=comma, solid] {data/error_over_refinement.csv};
        \addplot table [x=h, y=uff-un (test n_rhss 16384), col sep=comma, solid] {data/error_over_refinement.csv};
        \legend{
          $\norm{u_H - u_{\text{ref}}}$ (train),
          $\norm{u_H - u_{\text{ref}}}$ (test),
          $\norm{u_h - u_{\text{ref}}}$ (train),
          $\norm{u_h - u_{\text{ref}}}$ (test),
          $\norm{u_\NN - u_{\text{ref}}} (\text{train,} n=2^7)$,
          $\norm{u_\NN - u_{\text{ref}}} (\text{test,} n=2^7)$,
          $\norm{u_\NN - u_{\text{ref}}} (\text{train,} n=2^{11})$,
          $\norm{u_\NN - u_{\text{ref}}} (\text{test,} n=2^{11})$,
          $\norm{u_\NN - u_{\text{ref}}} (\text{train,} n=2^{14})$,
          $\norm{u_\NN - u_{\text{ref}}} (\text{test,} n=2^{14})$,
        }
		  \end{axis}
		  \end{tikzpicture}
      \captionof{figure}{Error for different refinement levels}
      \label{fig:error_ref_level}
\end{minipage}
\begin{minipage}[b]{0.4\textwidth}
    \begin{tikzpicture}[scale=0.8]
		  \begin{axis}[xlabel=epoch, ylabel=loss, ymode=log]
        \addplot +[mark=none]table [x=epochs, y=train loss, col sep=comma, solid] {data/loss_example.csv};
        \addplot +[mark=none]table [x=epochs, y=test loss, col sep=comma, solid] {data/loss_example.csv};
        \legend{
          train,
          test
        }
		  \end{axis}
		  \end{tikzpicture}
  \captionof{figure}{Example of loss function during the training}
  \label{fig:loss_example}
\end{minipage}
\end{figure}

Figure~\ref{fig:error_ref_level} shows the mean error of the proposed method w.r.t. a reference one, which is one level finer that the target one. Here we consider the error on training and testing datasets of different sizes. We also consider different refinement levels, i.e. $h=H/2, H/4$ and $H/8$. The $x$-axis corresponds to the fine step size $h$ and the $y$-axis to the mean error. Here, the two  topmost lines (blue) show the error of the coarse solution, which is used as an input to the neural network. The two bottom-most lines (green) show the error of the fine solution, used for the computation of the loss. The rest of the lines depict the errors of the proposed method for training data of different size. Here we observe that given enough data, one is able to get arbitrarily close to the fine solutions used for training.

Figure~\ref{fig:loss_example} shows an example of how the loss function behaves during the training. Here we have trained a network for 400 epochs and have used learning rate decay with a factor of 0.5 every 100 epochs. Due to this one can observe significant drops in the value of loss function at 100, 200 and 300 epochs.

\begin{figure}[t]
  \centering
  \begin{tabular}{ccccc}
    \rotatebox{90}{\phantom{XXXXXXXXX}Coarse Solution}&
    \includegraphics[width=0.45\textwidth]{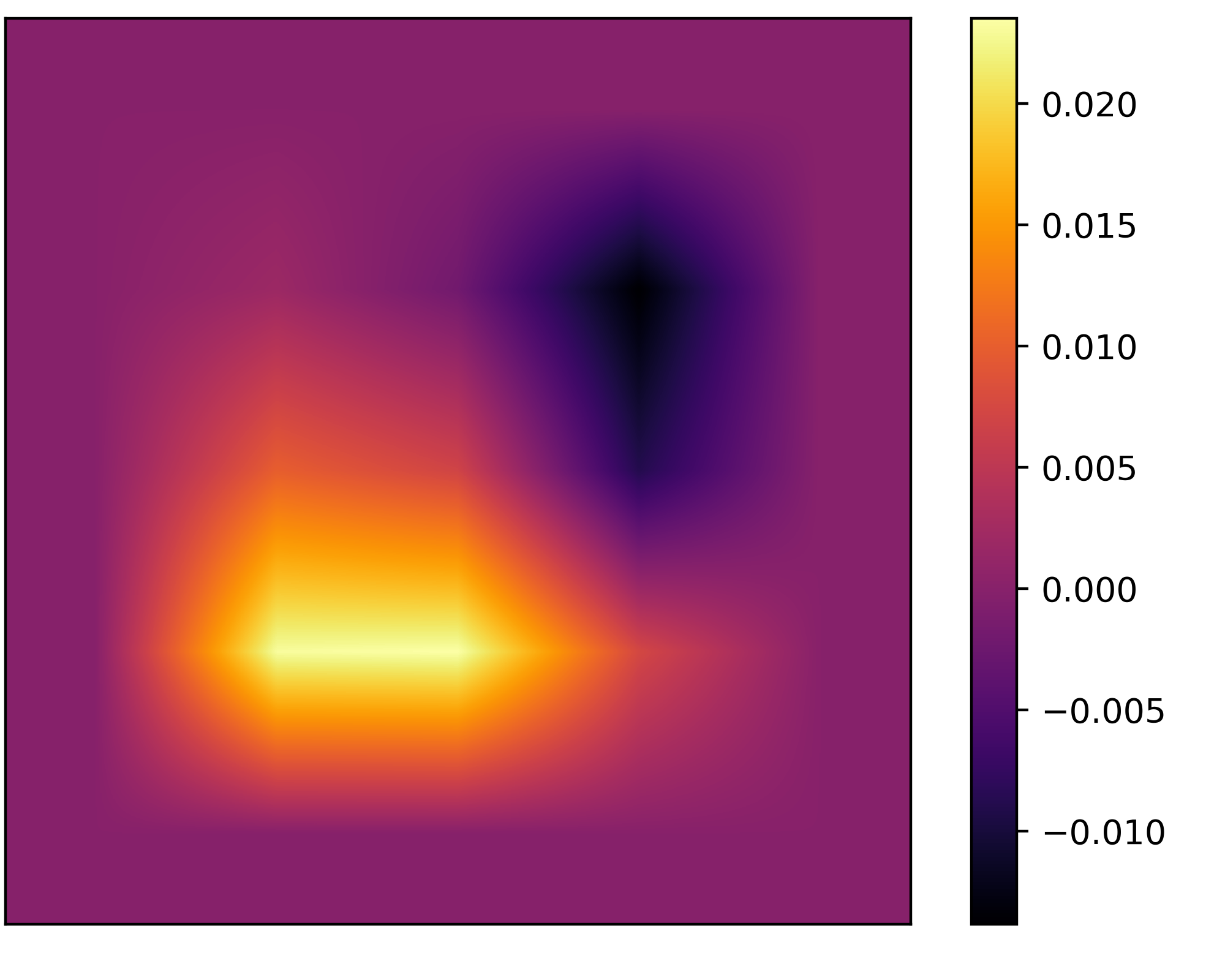}
    &&
    \includegraphics[width=0.45\textwidth]{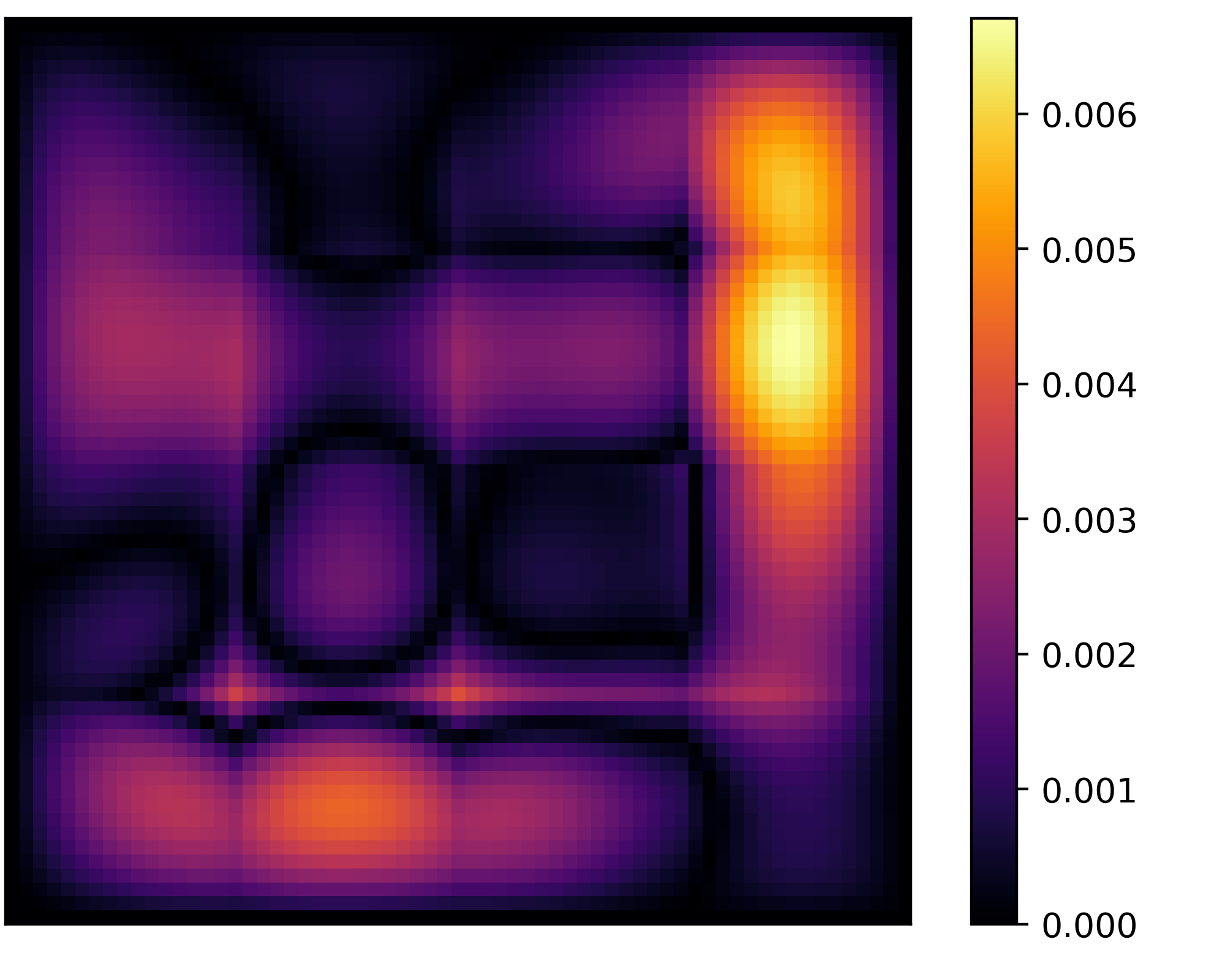}
    \\
    \rotatebox{90}{\phantom{XXXXXXXXXX}Fine Solution}&
    \includegraphics[width=0.45\textwidth]{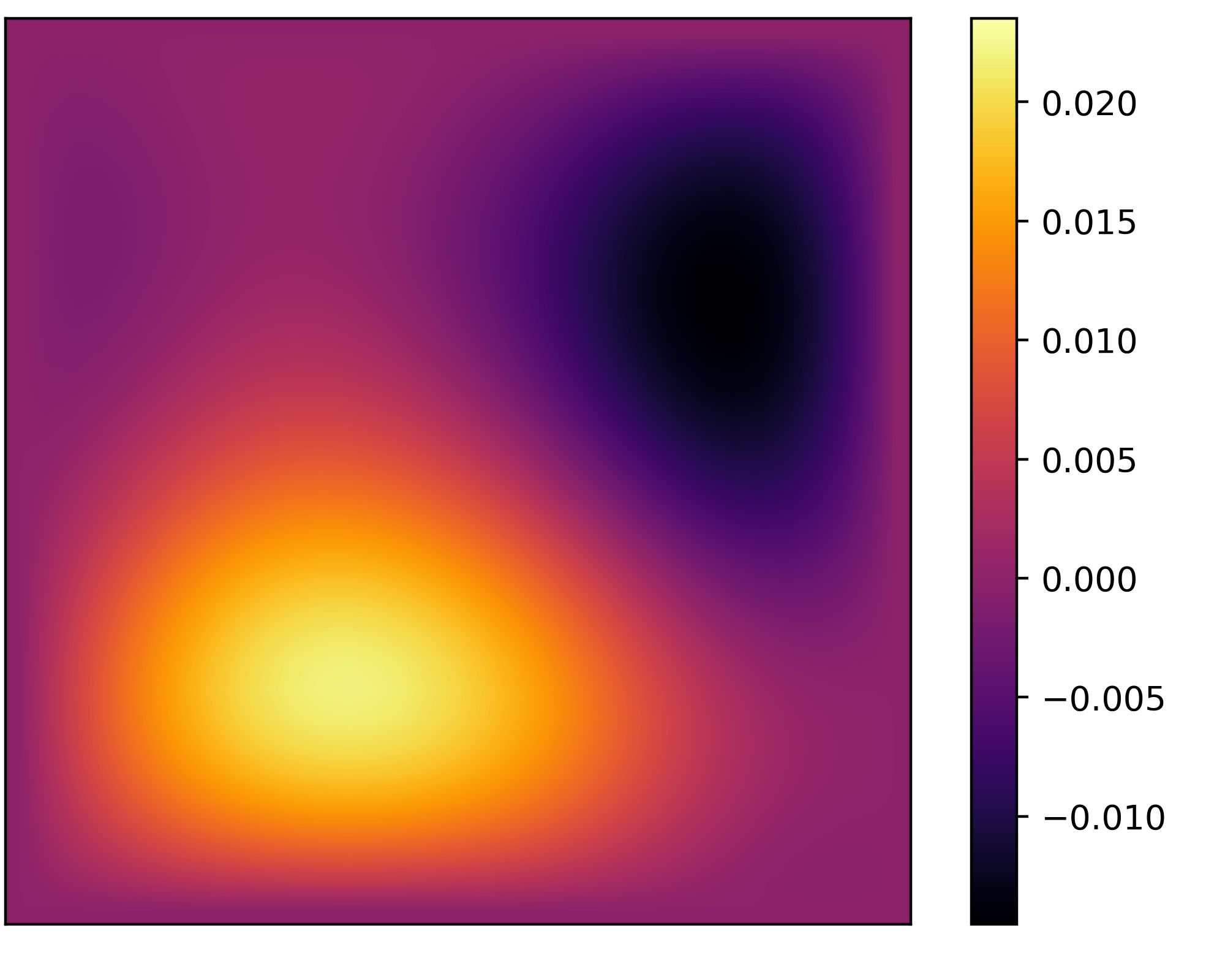}
    &&
    \includegraphics[width=0.45\textwidth]{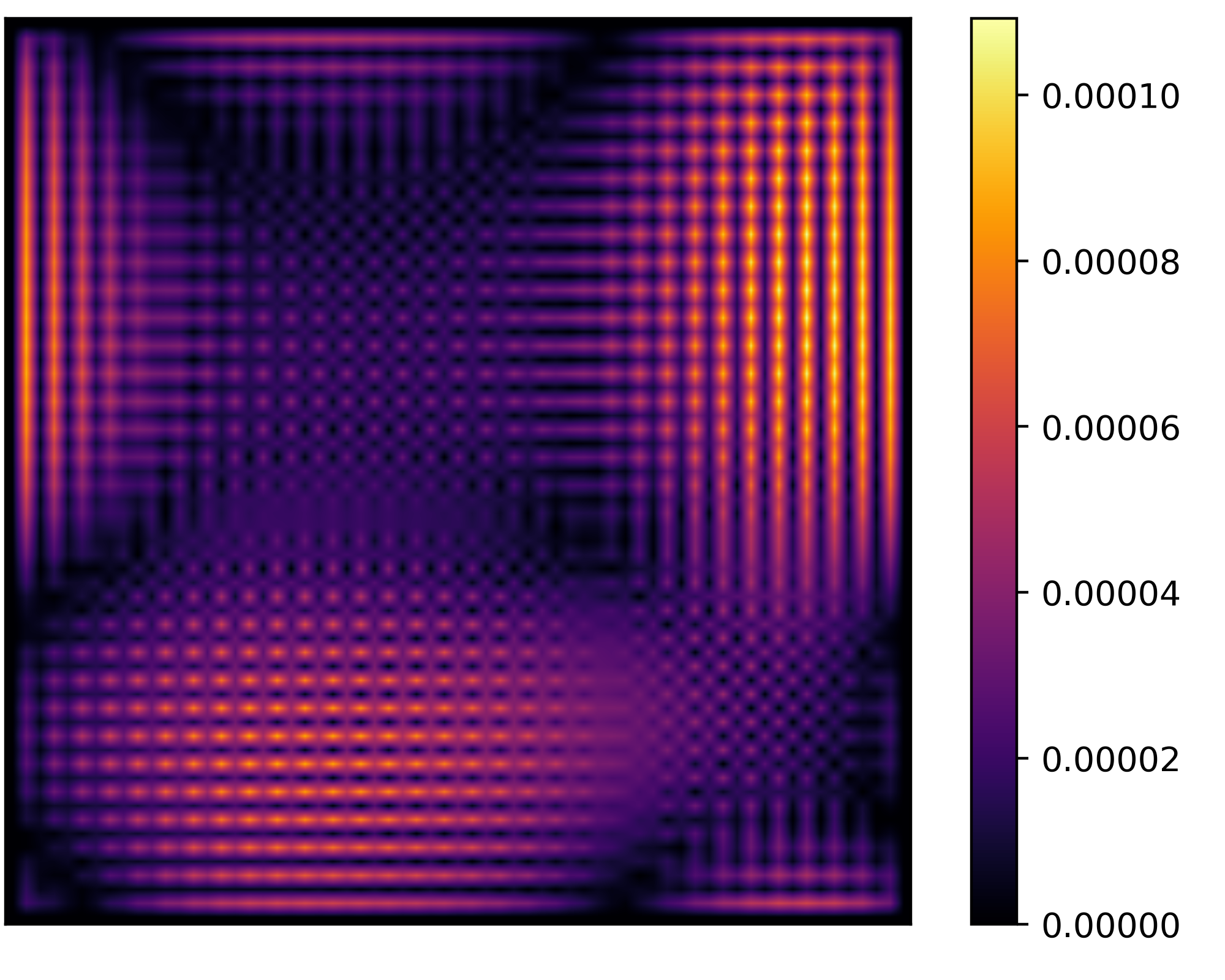}
    \\
    \rotatebox{90}{\phantom{XXXXXXXX}Hybrid NN Solution}&
    \includegraphics[width=0.45\textwidth]{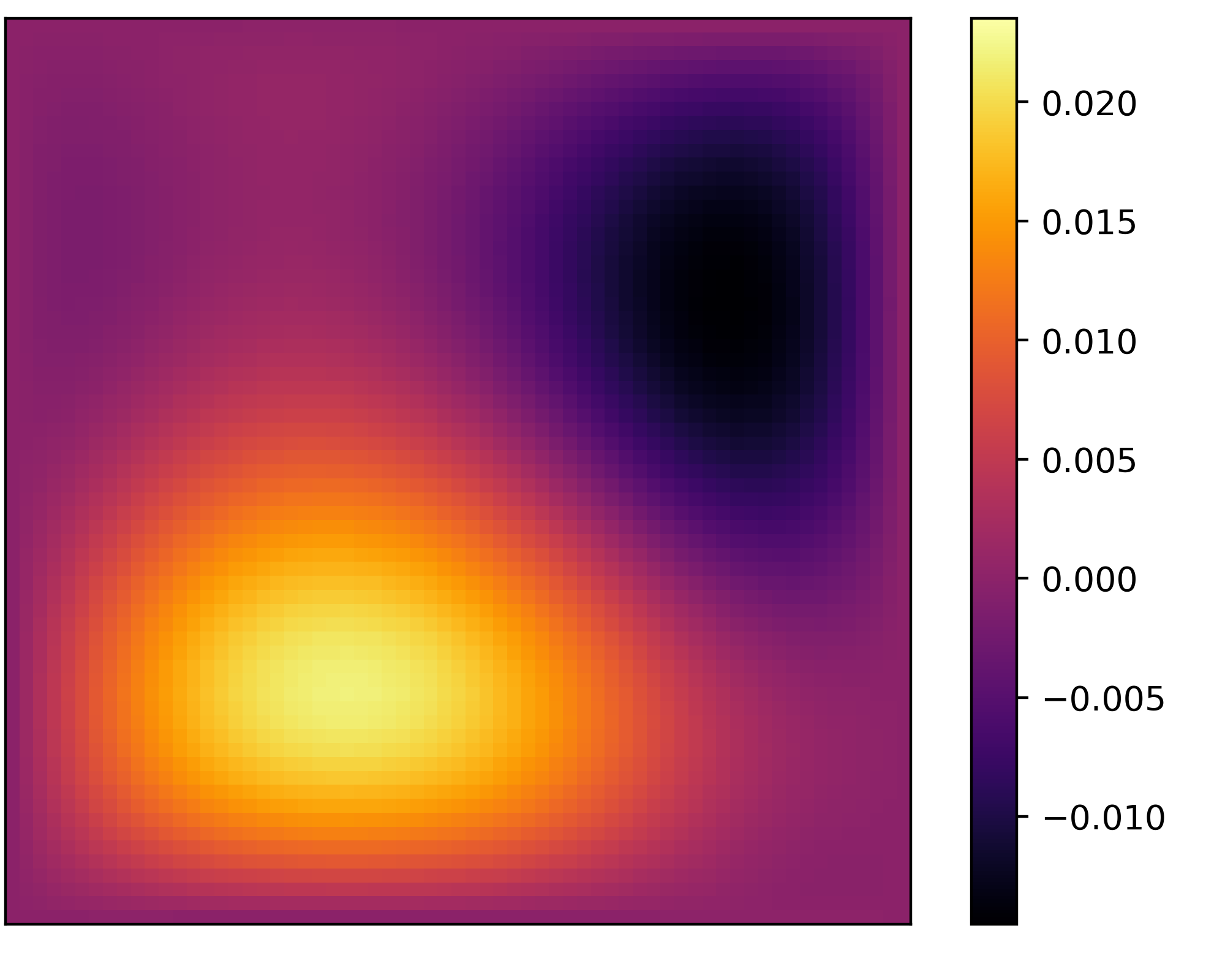}
    &&
    \includegraphics[width=0.45\textwidth]{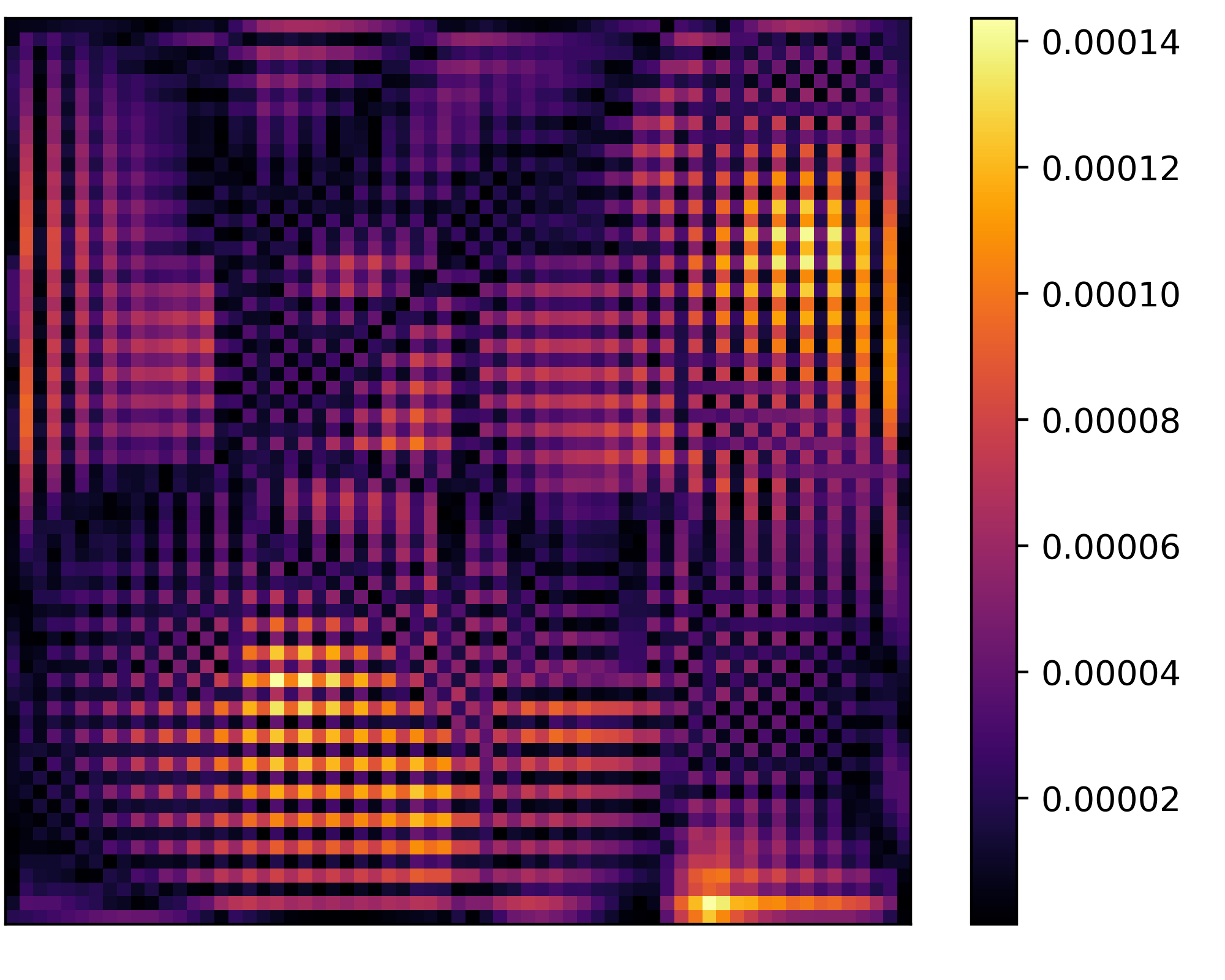}
  \end{tabular}
  \caption{Performance of the hybrid finite element - neural network approach. Top: coarse mesh solution $u_H$ and error. Middle: resolved fine mesh solution $u_h$ and error. Bottom: hybrid finite element - neural network solution $u_{\cal N}$ and error.}
  \label{fig:hybrid_solution_example}
\end{figure}

Figure~\ref{fig:hybrid_solution_example} shows an example of coarse, fine and network solution for a particular right hand side from the test data. Here we observe, that the quality of the network solution is significantly better than the quality of the original coarse solution.
\section*{Acknowledgements}
The authors acknowledge the support of the GRK 2297 MathCoRe, funded by the Deutsche Forschungsgemeinschaft, Grant Number 314838170.


\begin{thebibliography}{1}

\bibitem{Margenberg2021}
N.~Margenberg, D.~Hartmann, C.~Lessig, and T.~Richter.
\newblock A neural network multigrid solver for the navier-stokes equations.
\newblock {\em Journal of Computational Physics}, 460:110983, 2022.

\bibitem{raissi2019}
M.~Raissi, P.~Perdikaris, and G.E. Karniadakis.
\newblock Physics-informed neural networks: A deep learning framework for
  solving forward and inverse problems involving nonlinear partial differential
  equations.
\newblock {\em Journal of Computational Physics}, 378:686--707, 2019.

\bibitem{weinan2018}
W.~E and B.~Yu.
\newblock "the deep ritz method: A deep learning-based numerical algorithm for
  solving variational problems".
\newblock {\em Communications in Mathematics and Statistics}, 6(1):1--12, 2018.

\bibitem{bartlett2017spectrally}
Peter~L Bartlett, Dylan~J Foster, and Matus~J Telgarsky.
\newblock Spectrally-normalized margin bounds for neural networks.
\newblock {\em Advances in neural information processing systems}, 30, 2017.

\bibitem{kingma2017adam}
Diederik~P. Kingma and Jimmy Ba.
\newblock Adam: {A} method for stochastic optimization.
\newblock In {\em International Conference on Learning Representations (ICLR)},
  2015.

\end{thebibliography}
\end{document}